\makeatletter \@addtoreset{equation}{section}
\theoremstyle{definition}
\newtheorem{thm}{Theorem}[section]
\newtheorem{cor}{Corollary}[section]
\newcommand{\B}{\mathcal{B}}
\newcommand{\N}{\mathbb{N}}
\title{ New formulas for the  spectral radius  via Aluthge transform}
\author{Fadil Chabbabi and Mostafa Mbekhta}
\address{Universit\'e  Lille1, UFR de Math\'ematiques, Laboratoire CNRS-UMR 8524 P. Painlev\'e, 
59655 Villeneuve d'Ascq  Cedex, France}
\email{Fadil.Chabbabi@Math.univ-lille1.fr}
\email{Mostafa.Mbekhta@math.univ-lille1.fr}
\subjclass[2000]{47A13, 47A30, 47B37}
\keywords{ spectral radius, polar decomposition, $\lambda$-Aluthge transform, normaloid operator\\
This work was supported in part by the Labex CEMPI  (ANR-11-LABX-0007-01)}
\date{}
\begin{document}
\maketitle

\begin{abstract}
 
 In this paper we give several expressions of spectral radius of a bounded operator on a Hilbert space, 
  in terms of iterates of  Aluthge transformation, 
 numerical radius and the asymptotic behavior of the powers of this operator.
 
Also we obtain  several  characterizations of normaloid operators.
\end{abstract}

\section{Introduction}
Let $H$  be  complex Hilbert spaces  and  $\mathcal{B}(H)$ be the Banach space of all bounded linear operators from $H$ into it self.

 For $T\in \B(H)$, the spectrum of $T$ is denoted by $\sigma(T)$ and $r(T)$ its spectral radius. We denote also by $W(T)$ and $w(T)$ the numerical range and the numerical radius of $T$.

As usually, for $T\in \B(H)$ we denote the module of $T$ by $|T|=(T^*T)^{1/2}$ and we shall always write, without further
mention, $T=U|T|$ to be the unique polar decomposition  of $T$, where $U$ is the appropriate partial isometry satisfying  $\mathcal{N}(U)=\mathcal{N}(T)$. The Aluthge transform  introduced in \cite{alu}  as 
 $$\Delta(T)=|T|^{\frac{1}{2}}U|T|^{\frac{1}{2}}, \quad  T\in \B(H),$$
   to  extend some properties  of hyponormal  operators.
Later, in \cite{oku},    Okubo introduced a more general notion called  $\lambda-$Aluthge
transform which has also been studied in detail.

 For $\lambda \in [ 0, 1]$, the $\lambda$-Aluthge transform is  defined by, 
 $$
 \Delta_{\lambda}(T)=|T|^{\lambda}U|T|^{1-\lambda}, \quad T\in \B(H).
 $$
  Notice  that $\Delta_0(T) =U|T|=T$, and $\Delta_1(T) = |T|U$ which is known as Duggal's
  transform. It has since been studied in many different contexts and considered by a
  number of authors (see for instance,  \cite{ alu, ay, ams, kp1, kp2,  kp3} and some
  of the references there).  The interest of the Aluthge transform lies in the fact that
   it respects many   properties of the original operator. For example, (see \cite[Theorems 1.3, 1.5]{kp3})
\begin{equation}
   \sigma(\Delta_{\lambda}(T)) = \sigma(T),   \mbox{ for every } \; \; T \in \B(H), 
\end{equation} 
  
 Another important property is that $Lat(T)$, the lattice of $T$-invariant subspaces of
 $H$, is nontrivial if and only if $Lat(\Delta(T))$ is nontrivial
(see \cite[Theorem 1.15]{kp3}). 

Moreover,  Yamazaki (\cite{yam}) (see also, \cite{wan, tam}), established the following interesting  formula for the spectral radius  
   
   \begin{equation}
   \lim_{n\to \infty}\|\Delta_{\lambda}^n(T)\|=r(T)
   \end{equation} 
   where $\Delta_{\lambda}^n$ the $n$-th iterate of $\Delta_{\lambda}$, i.e  $\Delta_{\lambda}^{n+1}(T)=\Delta_{\lambda}(\Delta_{\lambda}^n(T))$, $\Delta_{\lambda}^0(T)=T$.  
  
  \bigskip 
  
In this paper we give several expressions of the spectral radius of an operator. Firstly in terms of the Aluthge transformation (section 2), and secondly, in section 3, we give several expressions of spectral radius,  based on numerical radius and Aluthge transformation. Also, We infer several characterizations of normaloid operators (i.e. $r(T) = \Vert T\Vert$).

\section{ formulas of spectral radius via Aluthge transform}

In this section, we use the  of Rota's Theorem,  order to obtain new formulas of spectral radius via Aluthge transformation

\begin{thm}
 For every operator  $T\in\B(H)$,  we have 
\begin{eqnarray*}
r(T)&=&\inf\{\|\Delta_{\lambda}(XTX^{-1})\|, \: X\in\B(H) \;\;\text{invertible}\;\}
\\&=&\inf\{\|\Delta_{\lambda}(e^A Te^{-A})\|, \: A\in\B(H) \;\;\text{self adjoint}\;\}.
\end{eqnarray*}
\end{thm}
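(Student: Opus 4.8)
The plan is to combine Rota's similarity theorem with Yamazaki's spectral radius formula (1.2). Recall Rota's theorem: if $r(T) < 1$, then $T$ is similar to a contraction, i.e. there is an invertible $X \in \B(H)$ with $\|XTX^{-1}\| \leq 1$. More precisely, for any $\varepsilon > 0$ one can find an invertible $X$ such that $\|XTX^{-1}\| \leq r(T) + \varepsilon$. I would first establish the inequality $\inf\{\|\Delta_{\lambda}(XTX^{-1})\|\} \geq r(T)$, which is the easy direction: for any invertible $X$, similarity preserves the spectrum, so $r(XTX^{-1}) = r(T)$; combined with the spectral-radius formula (1.2) applied to $S = XTX^{-1}$, one has $r(T) = r(S) = \lim_n \|\Delta_{\lambda}^n(S)\| \leq \|\Delta_{\lambda}(S)\|$, using that the iterates form a nonincreasing sequence (or at least that the limit is dominated by the first term, since $\|\Delta_{\lambda}(A)\| \leq \|A\|$ and hence the sequence of norms is nonincreasing). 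This gives the lower bound uniformly over all invertible $X$.

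For the reverse inequality $\inf \leq r(T)$, I would fix $\varepsilon > 0$ and apply Rota's theorem to the operator $T/(r(T)+\varepsilon)$, whose spectral radius is strictly less than $1$: this produces an invertible $X$ with $\|X\,\tfrac{T}{r(T)+\varepsilon}\,X^{-1}\| \leq 1$, i.e. $\|XTX^{-1}\| \leq r(T) + \varepsilon$. Then, using the general norm bound $\|\Delta_{\lambda}(S)\| \leq \|S\|$ (which follows from $\|\,|S|^{\lambda} U |S|^{1-\lambda}\| \leq \| |S| \| = \|S\|$ via submultiplicativity and $\|U\|\leq 1$), I obtain $\|\Delta_{\lambda}(XTX^{-1})\| \leq \|XTX^{-1}\| \leq r(T)+\varepsilon$. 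Letting $\varepsilon \to 0$ yields the first equality.

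For the second equality, the key observation is that the self-adjoint exponentials $e^A$ form a subfamily of invertible operators, so the infimum over $\{e^A Te^{-A}\}$ is a priori at least the infimum over all invertible $X$, hence at least $r(T)$ by the first part. The substantive point is to show that Rota's similarity can be realized (up to $\varepsilon$) by a \emph{positive} invertible operator, so that one may take $X = e^A$ with $A$ self-adjoint. I would argue that it suffices to use $X$ positive: given any invertible $X = V P$ (polar decomposition with $V$ unitary, $P = |X| > 0$ positive invertible), one checks that replacing $X$ by $P$ does not increase the relevant norm, since $\|XTX^{-1}\| = \|VPTP^{-1}V^*\| = \|PTP^{-1}\|$ because $V$ is unitary and norms are unitarily invariant. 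Writing the positive invertible $P = e^A$ with $A = \log P$ self-adjoint then identifies $\{PTP^{-1} : P > 0 \text{ invertible}\} = \{e^A T e^{-A} : A = A^*\}$, and the same two inequalities go through verbatim.

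**Main obstacle.** I expect the principal subtlety to lie in the reduction from arbitrary invertible $X$ to positive $X = e^A$ in the second equality: one must verify carefully that the unitary factor in the polar decomposition of $X$ can be discarded without loss, i.e. that $\|XTX^{-1}\| = \|\,|X|\,T\,|X|^{-1}\|$ by unitary invariance of the operator norm, and that every positive invertible operator is indeed of the form $e^A$ with $A$ self-adjoint via the Borel functional calculus. A secondary technical point is justifying $\|\Delta_{\lambda}(S)\| \leq \|S\|$ in full generality (including when $S$ is not invertible and $U$ is a genuine partial isometry rather than a unitary), but this follows directly from submultiplicativity of the norm together with $\|U\| \le 1$ and $\| |S|^\lambda \|\,\| |S|^{1-\lambda}\| = \|S\|$.
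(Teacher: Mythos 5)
Your proof is correct, and its skeleton matches the paper's: the easy inequality from spectrum-preservation under similarity, the hard inequality via Rota's theorem applied to $T/(r(T)+\varepsilon)$ together with $\|\Delta_\lambda(S)\|\le\|S\|$, and the passage to self-adjoint exponents through the polar decomposition $X=V|X|$ and $A=\log|X|$. There are, however, two genuine differences in the ingredients. For the lower bound you invoke Yamazaki's limit formula (1.2), $r(S)=\lim_n\|\Delta_\lambda^n(S)\|$, plus monotonicity of the norms of the iterates; the paper instead uses only the spectral invariance (1.1), $\sigma(\Delta_\lambda(S))=\sigma(S)$, to get $r(T)=r(\Delta_\lambda(XTX^{-1}))\le\|\Delta_\lambda(XTX^{-1})\|$ directly. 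Both are valid, but Yamazaki's theorem is a much deeper result than is needed here, so the paper's route is the more economical one at this step. Conversely, in discarding the unitary factor you are more economical than the paper: you observe that $\|XTX^{-1}\|=\|\,|X|\,T\,|X|^{-1}\|$ by unitary invariance of the operator norm, and only then apply $\|\Delta_\lambda(\cdot)\|\le\|\cdot\|$ to the positively-conjugated operator, whereas the paper first applies $\Delta_\lambda$ to $U e^{A}Te^{-A}U^*$ and then commutes it past the unitary via the equivariance identity $\Delta_\lambda(USU^*)=U\Delta_\lambda(S)U^*$, a standard but unproved fact about the Aluthge transform. Your variant needs no structural property of $\Delta_\lambda$ beyond norm contractivity, so it is slightly more self-contained on this point; in both treatments the net effect is the same.
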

\begin{proof}
For every invertible operator $X\in\B(H)$, by (1.1), we have 
$$\sigma(\Delta_{\lambda}(XTX^{-1}))=\sigma(XTX^{-1})=\sigma(T).$$
It follows that 
$$r(T)=r(\Delta_{\lambda}(XTX^{-1}))\leq \|\Delta_{\lambda}(XTX^{-1})\| \;\; \text{ for every invertible operator}\;\; X\in\B(H).$$
Hence 
 \begin{eqnarray*}
 r(T)&  \leq  & \inf\{ \Vert \Delta_{\lambda}(X T X^{-1})\Vert; \; \;  X \in \B(H) \;\;\text{invertible}\, \}\\
  & \leq &  \inf\{ \Vert \Delta_{\lambda}(\exp(A) T \exp(-A))\Vert; \; \;  A  \in \B(H)  \;\;\text{self adjoint} \,\},
 \end{eqnarray*}
In the other hand, for $\varepsilon >0$, we have 
$$r\big(\dfrac{T}{r(T)+\varepsilon}\big)=\dfrac{r(T)}{r(T)+\varepsilon}< 1.$$
From Rota's Theorem \cite[Theorem 2]{rota},  $\dfrac{T}{r(T)+\varepsilon}$ is similar to an contraction. Thus there exists an invertible operator  $X_\varepsilon \in\B(H)$ such that 
\begin{equation}
\|\Delta_\lambda(X_\varepsilon T X_\varepsilon^{-1})\|\leq \|X_\varepsilon T X_\varepsilon^{-1}\|\leq r(T)+\varepsilon.
\end{equation}
Now, let $X_\varepsilon=U_\varepsilon |X_\varepsilon|$ be the polar decomposition of $X_\varepsilon$. Clearly $U_\varepsilon$ is a unitary operator, and $|X_\varepsilon|$ is invertible. Therefore there exist $\alpha>0$ such that $\sigma(|X_\varepsilon|)\subseteq [\alpha, +\infty[$. Consequently $A_\varepsilon= \ln(|X_\varepsilon|)$ exits and it is self adjoint, we also have 
$$|X_\varepsilon|=e^{A_\varepsilon}\;\;\text{and}\;\; |X_\varepsilon|^{-1}=e^{-A_\varepsilon}.$$
Thus 
\begin{eqnarray*}
\|\Delta_\lambda(X_\varepsilon T X_\varepsilon^{-1})\|&=&\|\Delta_\lambda(Ue^{A_\varepsilon}T e^{-A_\varepsilon} U^*)\|\\
&=& \|U\Delta_\lambda(e^{A_\varepsilon}T e^{-A_\varepsilon} )U^*\|\\
&=&\|\Delta_\lambda(e^{A_\varepsilon}T e^{-A_\varepsilon} )\|.
\end{eqnarray*}
Hence $$\|\Delta_\lambda(e^{A_\varepsilon}T e^{-A_\varepsilon} )\|\leq \|X_\varepsilon T X_\varepsilon^{-1}\|\leq r(T)+\varepsilon.$$
It follows that for all $ \varepsilon >0$,
\begin{eqnarray*}
r(T)&\leq &\inf\{\|\Delta_{\lambda}(XTX^{-1})\|, \:X\in\B(H) \;\;\text{invertible}\;\}\\
&\leq &\inf\{\|\Delta_{\lambda}(e^A Te^{-A})\|, \:A\in\B(H) \;\;\text{self adjoint}\;\}\\
&\leq & \|\Delta_\lambda(e^{A_\varepsilon}T e^{-A_\varepsilon} )\|\leq \|X_\varepsilon T X_\varepsilon^{-1}\|\\
&\leq & r(T)+\varepsilon.
\end{eqnarray*}
Finally, since $ \varepsilon >0$ is arbitrary, we obtain
\begin{eqnarray*}
r(T)&=&\inf\{\|\Delta_{\lambda}(XTX^{-1})\|, \:X\in\B(H) \;\;\text{invertible}\;\}
\\&=&\inf\{\|\Delta_{\lambda}(e^A Te^{-A})\|,  \:A\in\B(H) \;\;\text{self adjoint}\;\}.
\end{eqnarray*}
Theroforte the proof of Theorem is complete.
\end{proof}

As immediate consequence of the Theorem 2.1, we obtain the following corollary
  which gives a formula of the spectral radius based on  $n$-th iterate of $\Delta_{\lambda}$.
\begin{cor} If $T\in\B(H)$, then for every $ n \geq 0$, 
\begin{eqnarray*}
r(T)&=&\inf\{\|\Delta_{\lambda}^n(XTX^{-1})\|, \:X\in\B(H) \;\;\text{invertible}\;\} 
\\&=&\inf\{\|\Delta_{\lambda}^n(e^A Te^{-A})\|, \:A\in\B(H) \;\;\text{self adjoint}\;\}.
\end{eqnarray*}
\end{cor}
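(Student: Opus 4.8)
The plan is to reduce everything to Theorem 2.1 by exploiting two structural features of the iterated transform $\Delta_\lambda^n$. First I would record the two inputs. The spectral invariance (1.1) iterates trivially: applying it $n$ times gives $\sigma(\Delta_\lambda^n(S)) = \sigma(S)$ for every $S \in \B(H)$ and every $n \ge 0$, so in particular $r(\Delta_\lambda^n(S)) = r(S)$. The second, and really the key, ingredient is that $\Delta_\lambda$ is norm-nonincreasing: since $|S|$ is positive with $\||S|\| = \|S\|$, the functional calculus gives $\||S|^\lambda\| = \|S\|^\lambda$ and $\||S|^{1-\lambda}\| = \|S\|^{1-\lambda}$, whence
\[
\|\Delta_\lambda(S)\| = \big\||S|^\lambda U |S|^{1-\lambda}\big\| \le \|S\|^\lambda \, \|U\| \, \|S\|^{1-\lambda} \le \|S\|,
\]
because $\|U\| \le 1$. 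Iterating, $\|\Delta_\lambda^n(S)\| \le \|\Delta_\lambda(S)\| \le \|S\|$ for every $n \ge 1$.

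With these in hand the two inequalities are short. For the lower bound, fix an invertible $X$; then $r(T) = r(XTX^{-1}) = r(\Delta_\lambda^n(XTX^{-1})) \le \|\Delta_\lambda^n(XTX^{-1})\|$, and taking the infimum over invertible $X$ --- which contains the subfamily $X = e^A$, $A$ self adjoint --- shows that $r(T)$ is a lower bound for both infima. For the upper bound I would use the norm-nonincreasing property to collapse the iterate onto a single step: for $n \ge 1$,
\[
\inf_{A=A^*}\|\Delta_\lambda^n(e^A T e^{-A})\| \le \inf_{A=A^*}\|\Delta_\lambda(e^A T e^{-A})\| = r(T),
\]
the last equality being exactly Theorem 2.1. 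Combined with the lower bound and the obvious inclusion of the self adjoint family in the invertible one, this sandwiches all three quantities so that they equal $r(T)$.

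The only genuinely separate point is the edge case $n = 0$, where $\Delta_\lambda^0 = \mathrm{id}$ provides no smoothing and the upper bound cannot be inherited from Theorem 2.1 through the contraction step. Here I would fall back on the mechanism already inside the proof of Theorem 2.1: by Rota's theorem, for each $\varepsilon > 0$ there is an invertible $X_\varepsilon = U_\varepsilon e^{A_\varepsilon}$ (polar decomposition, with $A_\varepsilon = \ln|X_\varepsilon|$ self adjoint) such that $\|e^{A_\varepsilon} T e^{-A_\varepsilon}\| = \|X_\varepsilon T X_\varepsilon^{-1}\| \le r(T) + \varepsilon$, which yields the classical similarity formula $r(T) = \inf\{\|e^A T e^{-A}\|\}$ and settles $n = 0$. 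I expect the main (indeed, essentially the only) subtlety to be verifying the norm-nonincreasing estimate cleanly for all $\lambda \in [0,1]$; once that is isolated, the corollary is genuinely immediate from Theorem 2.1.
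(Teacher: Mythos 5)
Your proof is correct and follows essentially the same route as the paper's own: spectral invariance of $\Delta_\lambda^n$ gives the lower bound, the norm-nonincreasing property $\|\Delta_\lambda^n(S)\|\le\|\Delta_\lambda(S)\|\le\|S\|$ collapses the iterate to a single step, and Theorem 2.1 closes the sandwich. The one genuine difference is to your credit: the paper's chain of inequalities only makes sense for $n\ge 1$, so its proof silently omits the case $n=0$ that the statement claims; you correctly isolated this edge case and settled it by invoking the Rota-type similarity argument (already embedded in the proof of Theorem 2.1, where $\|X_\varepsilon T X_\varepsilon^{-1}\|\le r(T)+\varepsilon$ with $X_\varepsilon = U_\varepsilon e^{A_\varepsilon}$), which yields the classical formula $r(T)=\inf\{\|e^{A}Te^{-A}\| : A=A^*\}$ directly.
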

\begin{proof}
First, note that $\|\Delta_\lambda(T)\| \leq \|T\|$, consequently we have 
\begin{equation}
\|\Delta_\lambda^n(T)\|\leq \|\Delta_\lambda^{n-1}(T)\|\leq ...\leq \|\Delta_\lambda(T)\|\leq \|T\|, \;\forall n\in\N^*.
\end{equation}
Now, clearly $\sigma(\Delta_\lambda^n(T))=\sigma(T)$, for all $n\in\N$. It follows that, for every invertible operator $X\in \B(H)$ we have
\begin{eqnarray*}
r(T)&=&r(\Delta_\lambda^n(XTX^{-1}))\\&\leq &\|\Delta_\lambda^n(XTX^{-1})\|\\& \leq &\|\Delta_\lambda(XTX^{-1})\|.
\end{eqnarray*}
Therefore 
\begin{eqnarray*}
r(T) &\leq & \inf\{\|\Delta_{\lambda}^n(XTX^{-1})\|,  \: X\in\B(H) \;\;\text{invertible}\;\} \\&\leq & \inf\{\|\Delta_{\lambda}^n(e^A Te^{-A})\|, ~~~A\in\B(H) \;\;\text{self adjoint}\;\}\\
&\leq &\inf\{\|\Delta_{\lambda}(e^A Te^{-A})\|, \:A\in\B(H) \;\;\text{self adjoint}\;\}\\&=&r(T).
\end{eqnarray*}
\end{proof}

An operator $T$ is said to be normaloid if   $r(T) = \Vert T\Vert$.

 As immediate consequence of the Corollary 2.1, we obtain the following corollary
  which is a characterization of normaloid operators  via $\lambda$-Aluthge transformation :
\begin{cor} If $T\in\B(H)$, then the following assertions are equivalent

(i) $T$ is normaloid;

(ii) $\Vert T\Vert \leq  \|\Delta_\lambda(XTX^{-1})\|,  \: \text{for all invertible } X\in \B(H)$;

(iii)  $\Vert T\Vert \leq  \|\Delta_\lambda^n(XTX^{-1})\|,  \: \text{for all invertible } X\in \B(H)  \: \text{ and  for all natural number } n$.

\end{cor}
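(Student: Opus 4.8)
The plan is to prove the equivalences by running the cycle $(i)\Rightarrow(iii)\Rightarrow(ii)\Rightarrow(i)$, since everything needed is already contained in Theorem 2.1 and Corollary 2.1 and no new analytic input is required. The whole argument rests on two facts established earlier. First, $\sigma(\Delta_\lambda^n(XTX^{-1}))=\sigma(T)$ for every invertible $X\in\B(H)$ and every $n$, so that $r(\Delta_\lambda^n(XTX^{-1}))=r(T)$ and hence $r(T)\le\|\Delta_\lambda^n(XTX^{-1})\|$. Second, the identities $r(T)=\inf\{\|\Delta_\lambda(XTX^{-1})\|:X\in\B(H)\text{ invertible}\}$ from Theorem 2.1 and $r(T)=\inf\{\|\Delta_\lambda^n(XTX^{-1})\|:X\in\B(H)\text{ invertible}\}$ from Corollary 2.1, together with the trivial bound $r(T)\le\|T\|$.

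For $(i)\Rightarrow(iii)$, I would assume $r(T)=\|T\|$ and fix an arbitrary invertible $X$ and an arbitrary $n\in\N$. The spectral-radius inequality $r(T)\le\|\Delta_\lambda^n(XTX^{-1})\|$ recalled above then reads $\|T\|=r(T)\le\|\Delta_\lambda^n(XTX^{-1})\|$, which is exactly (iii). The implication $(iii)\Rightarrow(ii)$ is immediate, since (ii) is simply the instance $n=1$ of (iii).

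For $(ii)\Rightarrow(i)$, I would read the hypothesis as saying that $\|T\|$ is a lower bound for the set $\{\|\Delta_\lambda(XTX^{-1})\|:X\in\B(H)\text{ invertible}\}$, whence $\|T\|\le\inf\{\|\Delta_\lambda(XTX^{-1})\|:X\in\B(H)\text{ invertible}\}$. By Theorem 2.1 this infimum equals $r(T)$, so $\|T\|\le r(T)$; combined with the always-valid inequality $r(T)\le\|T\|$, this forces $r(T)=\|T\|$, i.e. $T$ is normaloid.

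The only point requiring attention, rather than a genuine obstacle, is bookkeeping about which earlier result to invoke: the passage from (ii) to (i) uses the first-iterate formula of Theorem 2.1, while the $n$-dependent statements rely on the general-$n$ formula of Corollary 2.1. Once the cycle is closed the three assertions are equivalent.
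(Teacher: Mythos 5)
Your proof is correct and follows essentially the same route the paper intends: the corollary is stated there as an immediate consequence of Theorem 2.1 and Corollary 2.1, namely that $\Vert T\Vert$ bounding the set $\{\|\Delta_\lambda^n(XTX^{-1})\|\}$ from below forces $\Vert T\Vert\leq r(T)$ via the infimum formulas, while the converse direction is just $r(T)=r(\Delta_\lambda^n(XTX^{-1}))\leq\|\Delta_\lambda^n(XTX^{-1})\|$. Your cyclic organization $(i)\Rightarrow(iii)\Rightarrow(ii)\Rightarrow(i)$ is only a bookkeeping variant of the same argument.
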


As immediate consequence of the Corollary 2.2, we obtain a new characterization of normaloid operators

\begin{cor} If $T\in\B(H)$, then the following assertions are equivalent

(i) $T$ is normaloid;

(ii) $\Vert T\Vert \leq  \|XTX^{-1}\|,  \: \text{for all invertible } X\in \B(H)$;

\end{cor}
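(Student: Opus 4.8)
The plan is to derive Corollary 2.4 directly from Corollary 2.2, which is the case $n=0$ of the preceding results. Recall that $\Delta_\lambda^0(T)=T$, so specializing Corollary 2.2 at $n=0$ already gives
\begin{equation*}
r(T)=\inf\{\|XTX^{-1}\|,\ X\in\B(H)\ \text{invertible}\}.
\end{equation*}
The entire statement then falls out of this identity once we observe the elementary lower bound $r(T)\le\|XTX^{-1}\|$ for every invertible $X$, which holds because similarity preserves the spectrum: $\sigma(XTX^{-1})=\sigma(T)$, hence $r(XTX^{-1})=r(T)$, and the spectral radius never exceeds the norm.

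First I would prove the implication (i)$\Rightarrow$(ii). Assuming $T$ is normaloid, i.e. $\|T\|=r(T)$, I combine the lower bound just mentioned with the normaloid hypothesis: for every invertible $X$,
\begin{equation*}
\|T\|=r(T)=r(XTX^{-1})\le\|XTX^{-1}\|,
\end{equation*}
which is exactly (ii). This direction is immediate and uses only that the spectral radius is a similarity invariant bounded above by the operator norm.

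For the converse (ii)$\Rightarrow$(i), I would invoke the $n=0$ instance of Corollary 2.2 as displayed above. Suppose (ii) holds, so $\|T\|\le\|XTX^{-1}\|$ for all invertible $X$; taking the infimum over all such $X$ gives $\|T\|\le\inf\{\|XTX^{-1}\|\}=r(T)$. The reverse inequality $r(T)\le\|T\|$ is always true, so $r(T)=\|T\|$, meaning $T$ is normaloid. I do not expect a genuine obstacle here, since all the analytic content—Rota's theorem and the norm-reduction of the Aluthge transform—was already absorbed into Theorem 2.1 and Corollary 2.2; the only subtlety is to state clearly that one is using the $n=0$ case, where the Aluthge iterate reduces to the identity and the formula collapses to Rota's similarity characterization of the spectral radius, $r(T)=\inf_X\|XTX^{-1}\|$.
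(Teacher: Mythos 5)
Your proof is correct and follows essentially the route the paper intends: the paper states this corollary without proof as an ``immediate consequence,'' and the intended argument is precisely yours, namely the $n=0$ specialization $r(T)=\inf\{\|XTX^{-1}\|:\ X\ \text{invertible}\}$ of the iterated-Aluthge infimum formula (the paper's Corollary~2.1), combined with the similarity invariance of the spectrum for (i)$\Rightarrow$(ii) and with $r(T)\le\|T\|$ for (ii)$\Rightarrow$(i). The only blemish is a harmless mislabeling of which corollary supplies the infimum formula; the mathematics is complete and matches the paper.
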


\begin{thm} Let $T\in \B(H)$. Then for each natural number n, we have
\begin{eqnarray*}
r(T)&=&\lim_k \|\Delta_\lambda^n(T^k)\|^{1/k}\\
&=& \lim_k\|\Delta_\lambda(T^k)\|^{1/k}.
\end{eqnarray*}
\end{thm}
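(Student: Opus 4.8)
The plan is to squeeze $\|\Delta_\lambda^n(T^k)\|^{1/k}$ between two sequences that both converge to $r(T)$, using Gelfand's spectral radius formula $r(S)=\lim_k\|S^k\|^{1/k}$ together with the two structural facts already recorded in the excerpt: the norm-decreasing property $\|\Delta_\lambda(S)\|\le\|S\|$, which iterates to $\|\Delta_\lambda^n(S)\|\le\|S\|$ exactly as in (2.2), and the spectral invariance $\sigma(\Delta_\lambda^n(S))=\sigma(S)$ obtained by applying (1.1) $n$ times. The key observation is to apply both of these not to $T$ but to the power $S=T^k$, and then let $k\to\infty$.

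First I would establish the upper estimate. Iterating the norm inequality gives $\|\Delta_\lambda^n(T^k)\|\le\|T^k\|$, hence $\|\Delta_\lambda^n(T^k)\|^{1/k}\le\|T^k\|^{1/k}$ for every $k$. Letting $k\to\infty$ and invoking Gelfand's formula for $T$ yields $\limsup_k\|\Delta_\lambda^n(T^k)\|^{1/k}\le r(T)$.

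Next comes the lower estimate. Spectral invariance gives $r(\Delta_\lambda^n(T^k))=r(T^k)$, and by the spectral mapping theorem $r(T^k)=r(T)^k$. Since $r(\cdot)\le\|\cdot\|$ always holds, this produces $r(T)^k=r(\Delta_\lambda^n(T^k))\le\|\Delta_\lambda^n(T^k)\|$, so that $r(T)\le\|\Delta_\lambda^n(T^k)\|^{1/k}$ for every $k$ and therefore $r(T)\le\liminf_k\|\Delta_\lambda^n(T^k)\|^{1/k}$. Combining the two bounds forces $\liminf=\limsup=r(T)$, so the limit exists and equals $r(T)$; the second displayed equality is just the special case $n=1$.

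There is no genuine obstacle here beyond assembling the ingredients: the whole argument is a clean squeeze, and the only point deserving a word of care is that this squeeze simultaneously proves existence of the limit and pins down its value, rather than merely constraining subsequential limits. Since Gelfand's formula, the inequality (2.2), and the spectral identity (1.1) iterated $n$ times are all already at hand, the proof is short and requires no new machinery.
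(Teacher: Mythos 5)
Your proof is correct and follows essentially the same route as the paper: both squeeze $\|\Delta_\lambda^n(T^k)\|^{1/k}$ between $r(T)$ (via spectral invariance of $\Delta_\lambda$ applied to $T^k$ and $r(T^k)=r(T)^k$) and $\|T^k\|^{1/k}$ (via the norm-decreasing property), then apply Gelfand's formula. The only cosmetic difference is that the paper keeps $\|\Delta_\lambda(T^k)\|$ inside the chain of inequalities so that both displayed equalities drop out at once, whereas you obtain the second one as the case $n=1$; your explicit $\limsup$/$\liminf$ bookkeeping is slightly more careful than the paper's.
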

\begin{proof}
Note that,
\begin{equation}
r(T)=r(\Delta_\lambda^n(T))\leq \|\Delta_\lambda^n(T)\|\leq \|\Delta_\lambda(T)\|\leq \|T\| \;\;\forall n\in \N^*.
\end{equation}
Let $k\in \N$ be arbitrary, we have 
 $$r(T)^k=r(T^k)=r(\Delta_\lambda^n(T^k))\leq \|\Delta_\lambda^n(T^k)\|\leq\|\Delta_\lambda(T^k)\|\leq \|T^k\| \;\;\forall n\in \N.$$
Hence 
$$r(T)\leq \| \Delta_\lambda^n(T^k)\|^{1/k}\leq\|\Delta_\lambda(T^k)\|^{1/k}\leq \|T^k\|^{1/k}.$$
Therefore 
$$r(T)\leq \lim_k \| \Delta_\lambda^n(T^k)\|^{1/k}\leq \lim_k\|\Delta_\lambda(T^k)\|^{1/k}\leq \lim_k \|T^k\|^{1/k}=r(T).$$
Which completes the proof.
\end{proof}

As immediate consequence of  Theorem 2.2, we obtain the following corollary
  which is a new  characterization of normaloid operators
\begin{cor} If $T\in\B(H)$, then the following assertions are equivalent

(i) $T$ is normaloid;

(ii) $\Vert T\Vert^k = \|\Delta_\lambda(T^k)\|,  \: \text{for all natural number } k$;

(iii) $\Vert T\Vert^k = \|\Delta_\lambda^n(T^k)\|, \: \text{for every natural number } k, n$
\end{cor}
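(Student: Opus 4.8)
The plan is to establish Corollary 2.3 as a direct application of Theorem 2.2, which states that $r(T)=\lim_k\|\Delta_\lambda^n(T^k)\|^{1/k}=\lim_k\|\Delta_\lambda(T^k)\|^{1/k}$ for every natural number $n$. The three implications $(i)\Rightarrow(iii)\Rightarrow(ii)\Rightarrow(i)$ should close the cycle, and the key observation is that the chain of inequalities already appearing in the proof of Theorem 2.2, namely
\begin{equation*}
r(T)^k \leq \|\Delta_\lambda^n(T^k)\| \leq \|\Delta_\lambda(T^k)\| \leq \|T^k\| \leq \|T\|^k,
\end{equation*}
provides all the control needed, where the last inequality is the submultiplicativity of the operator norm.

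First I would prove $(i)\Rightarrow(iii)$. Assume $T$ is normaloid, so $r(T)=\|T\|$, hence $r(T)^k=\|T\|^k$ for every $k$. Feeding this into the displayed chain forces every term between $\|T\|^k$ and $r(T)^k$ to collapse to the common value $\|T\|^k$; in particular $\|\Delta_\lambda^n(T^k)\|=\|T\|^k$ for all $k$ and all $n$, which is exactly $(iii)$. The implication $(iii)\Rightarrow(ii)$ is immediate upon taking $n=1$ (or $n=0$, depending on convention), since the hypothesis in $(iii)$ ranges over all $n$ and $(ii)$ is the single instance for the first iterate.

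The remaining step $(ii)\Rightarrow(i)$ is where the limiting formula of Theorem 2.2 does the real work. Assuming $\|T\|^k=\|\Delta_\lambda(T^k)\|$ for every $k$, I would take $k$-th roots to obtain $\|\Delta_\lambda(T^k)\|^{1/k}=\|T\|$ for all $k$, and then pass to the limit as $k\to\infty$. By the second equality in Theorem 2.2, $\lim_k\|\Delta_\lambda(T^k)\|^{1/k}=r(T)$, so the left-hand side converges to $r(T)$ while the right-hand side is the constant $\|T\|$; hence $r(T)=\|T\|$, i.e. $T$ is normaloid.

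I do not anticipate a genuine obstacle here, since the corollary is essentially a repackaging of the inequalities and the limit already secured in Theorem 2.2; the only point demanding a little care is the direction $(i)\Rightarrow(iii)$, where one must verify that the squeeze genuinely pins down the iterate norm for \emph{all} $n$ simultaneously rather than merely in the limit. This is handled by noting that the inequalities $r(T)^k\leq\|\Delta_\lambda^n(T^k)\|\leq\|T^k\|\leq\|T\|^k$ hold termwise for each fixed $n$ and $k$, so equality of the two outer bounds under the normaloid hypothesis immediately yields equality throughout, independent of $n$.
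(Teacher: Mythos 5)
Your proof is correct and follows exactly the route the paper intends: the corollary is stated there as an immediate consequence of Theorem 2.2, and your argument simply makes explicit the two ingredients already present in that theorem's proof, namely the termwise chain $r(T)^k = r(\Delta_\lambda^n(T^k)) \leq \|\Delta_\lambda^n(T^k)\| \leq \|\Delta_\lambda(T^k)\| \leq \|T^k\| \leq \|T\|^k$ for the squeeze in $(i)\Rightarrow(iii)$, and the limit formula $r(T)=\lim_k\|\Delta_\lambda(T^k)\|^{1/k}$ for $(ii)\Rightarrow(i)$. No gaps; this is the same argument, just written out.
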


\section{ Spectral radius via numerical radius and Aluthge transform}
For $T\in \B(T)$, we denote the numerical range and numerical radius  of $T$ by $W(T)$ and $w(T)$, respectively. 
$$ W(T) = \{ <Tx, x>; \; \; \Vert x\Vert = 1\} \quad \text{and} \quad w(T) = \sup\{ \vert \lambda \vert; \; \; \lambda \in W(T)\}.$$

In the following theorem, we obtain a new expression of the spectral radius  by means of the numerical radius and Aluthge transform.
\begin{thm}
 For every operator  $T\in\B(H)$ and for each natural number n,  we have 
\begin{eqnarray*}
r(T)&=&\inf\{ w(\Delta_{\lambda}^n(XTX^{-1})), \quad X\in\B(H) \;\;\text{invertible}\;\}
\\&=&\inf\{w(\Delta_{\lambda}^n(e^A Te^{-A})),  \quad A\in\B(H) \;\;\text{self adjoint}\;\}.
\end{eqnarray*}
\end{thm}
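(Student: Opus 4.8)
The plan is to mirror the structure of the proof of Theorem 2.1, but with the operator norm replaced throughout by the numerical radius $w(\cdot)$. The essential inputs are three standard facts about the numerical radius that I would invoke: first, that $r(S)\leq w(S)$ for every $S\in\B(H)$ (the spectral radius never exceeds the numerical radius); second, that $w$ is invariant under unitary conjugation, i.e. $w(USU^*)=w(S)$ for every unitary $U$; and third, that $w(S)\leq\|S\|$. Combining the first of these with the spectral invariance of the $\lambda$-Aluthge transform $\sigma(\Delta_\lambda^n(S))=\sigma(S)$ and the similarity invariance of the spectrum gives, for every invertible $X$,
\begin{eqnarray*}
r(T)=r(\Delta_\lambda^n(XTX^{-1}))\leq w(\Delta_\lambda^n(XTX^{-1})),
\end{eqnarray*}
which already yields the ``$\leq$'' direction for both infima, since the self-adjoint exponentials $e^A$ are a subfamily of the invertible operators.

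For the reverse inequality I would again use Rota's Theorem exactly as in Theorem 2.1. Fix $\varepsilon>0$; then $T/(r(T)+\varepsilon)$ is similar to a contraction, so there is an invertible $X_\varepsilon$ with $\|X_\varepsilon TX_\varepsilon^{-1}\|\leq r(T)+\varepsilon$. The point is that this controls the numerical radius as well, because for any operator $S$ one has $w(\Delta_\lambda^n(S))\leq\|\Delta_\lambda^n(S)\|\leq\|S\|$ by the norm-contractivity of the Aluthge iterates established in inequality (2.2). Applied to $S=X_\varepsilon TX_\varepsilon^{-1}$ this gives
\begin{eqnarray*}
w(\Delta_\lambda^n(X_\varepsilon TX_\varepsilon^{-1}))\leq\|X_\varepsilon TX_\varepsilon^{-1}\|\leq r(T)+\varepsilon.
\end{eqnarray*}

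To pass from the invertible infimum to the self-adjoint-exponential infimum, I would reuse the polar decomposition argument verbatim: writing $X_\varepsilon=U_\varepsilon|X_\varepsilon|$ with $U_\varepsilon$ unitary and $|X_\varepsilon|=e^{A_\varepsilon}$ for a self-adjoint $A_\varepsilon$, the intertwining identity $\Delta_\lambda(U\,\cdot\,U^*)=U\Delta_\lambda(\cdot)U^*$ (valid for unitary $U$, and iterating, for $\Delta_\lambda^n$) combined with the \emph{unitary invariance} of $w$ gives
\begin{eqnarray*}
w(\Delta_\lambda^n(e^{A_\varepsilon}Te^{-A_\varepsilon}))=w(U_\varepsilon\Delta_\lambda^n(e^{A_\varepsilon}Te^{-A_\varepsilon})U_\varepsilon^*)=w(\Delta_\lambda^n(X_\varepsilon TX_\varepsilon^{-1}))\leq r(T)+\varepsilon.
\end{eqnarray*}
Letting $\varepsilon\to0$ sandwiches both infima between $r(T)$ and $r(T)$, completing the proof.

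The only genuinely non-routine point, and the step I would treat most carefully, is the unitary equivariance of the iterated transform, $\Delta_\lambda^n(USU^*)=U\Delta_\lambda^n(S)U^*$ for unitary $U$. This follows because if $S=V|S|$ is the polar decomposition then $USU^*=(UVU^*)(U|S|U^*)$ is the polar decomposition of $USU^*$ (as $UVU^*$ is a partial isometry with the correct kernel and $U|S|U^*=|USU^*|$ is positive), whence $\Delta_\lambda(USU^*)=U\Delta_\lambda(S)U^*$ by the continuous functional calculus commuting with unitary conjugation; the $n$-fold statement then follows by induction. This identity was already used implicitly in Theorem 2.1, so it may be invoked, but it is the load-bearing fact that lets the numerical radius, rather than just the norm, survive the reduction to self-adjoint exponentials.
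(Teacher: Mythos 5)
Your proof is correct, but it takes a longer route than the paper's. The paper's proof of this theorem is a pure sandwich argument: since $r(S)\leq w(S)\leq \Vert S\Vert$ for every $S$, one has for every invertible $X$ and every natural number $n$
\begin{equation*}
r(T)=r(\Delta_{\lambda}^n(XTX^{-1}))\leq w(\Delta_{\lambda}^n(XTX^{-1}))\leq \Vert \Delta_{\lambda}^n(XTX^{-1})\Vert,
\end{equation*}
so both numerical-radius infima are squeezed between $r(T)$ and the norm infimum $\inf\{\Vert\Delta_{\lambda}^n(e^ATe^{-A})\Vert\colon A=A^*\}$, which is already known to equal $r(T)$ by Corollary 2.1; no new use of Rota's theorem is needed. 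You instead re-run the whole Rota machinery: you reconstruct $X_\varepsilon$, take its polar decomposition, and then invoke the unitary equivariance $\Delta_{\lambda}^n(USU^*)=U\Delta_{\lambda}^n(S)U^*$ together with the unitary invariance of $w$ to transfer the bound to the self-adjoint exponential family. All of these steps are valid (and your careful proof of the equivariance identity is a genuine service --- the paper uses it for $n=1$ in Theorem 2.1 without justification), but they are unnecessary here: because the numerical radius only ever needs to be bounded \emph{above} by the operator norm, the reduction to exponentials can be inherited wholesale from the norm-version corollary rather than re-proved. So your ``load-bearing fact'' is not actually load-bearing for this theorem; what you gain is a self-contained argument that does not depend on Corollary 2.1, and what the paper gains is a three-line proof.
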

\begin{proof} It is well known that  $\;  r(T) \leq w(T) \leq \Vert T\Vert.$ Thus, for all $X\in \B(H)$, invertible and for each natural number n,
 we have
$$ r(T) = r(\Delta_{\lambda}^n(XTX^{-1})) \leq w(\Delta_{\lambda}^n(XTX^{-1})) \leq \Vert \Delta_{\lambda}^n(XTX^{-1})\Vert$$
It follows that
 \begin{eqnarray*}
 r(T)&  \leq  & \inf\{ w( \Delta_{\lambda}^n(X T X^{-1})); \; \;  X \in \B(H) \;\;\text{invertible}\, \}\\
  & \leq &  \inf\{ w( \Delta_{\lambda}^n(\exp(A) T \exp(-A))); \; \;  A  \in \B(H)  \;\;\text{self adjoint} \,\},\\
   & \leq &  \inf\{ \Vert \Delta_{\lambda}^n(\exp(A) T \exp(-A))\Vert; \; \;  A  \in \B(H)  \;\;\text{self adjoint} \,\}\\
   & = & r(T)  \quad \quad \text{( by Corollary 2.1)}.
 \end{eqnarray*}
Hence we obtain  the desired equalities.
\end{proof}

For a bounded linear operator $S$, we will write $\mathcal{R}e(S) = \frac{1}{2}(S + S^*)$, the real part of $S$. And we denote  by $ \overline{W}(S)$  the closure 
 of the numerical range of $S$. Then we have de following result

\begin{thm}
 For every operator  $T\in\B(H)$,  there exists $\theta \in \mathbb{R}$ such that for all natural number $n$, 
\begin{eqnarray*}
r(T)&=&\inf\{w(\mathcal{R}e(\Delta_{\lambda}^n(\exp(i\theta) XTX^{-1}))), \: X\in\B(H) \;\;\text{invertible}\;\}
\\&=&\inf\{\| \mathcal{R}e(\Delta_{\lambda}^n(\exp(i\theta)XTX^{-1}))\|, \: X\in\B(H) \;\;\text{invertible}\;\}.
\end{eqnarray*}
\end{thm}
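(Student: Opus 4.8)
The plan is to combine two elementary facts with the spectral invariance already exploited in Section~2. The first fact is that for any operator $S$ the real part $\mathcal{R}e(S)=\frac12(S+S^*)$ is self-adjoint, and a self-adjoint operator has numerical radius equal to its norm, $w(\mathcal{R}e(S))=\|\mathcal{R}e(S)\|$. Applying this termwise to $S=\Delta_{\lambda}^n(\exp(i\theta)XTX^{-1})$ shows that the two infima in the statement coincide for \emph{every} choice of $\theta$; it therefore suffices to prove that one of them equals $r(T)$.

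The delicate point is the choice of $\theta$, and it is dictated entirely by the spectrum. Since $\sigma(T)$ is compact, I would fix a point $\mu_0\in\sigma(T)$ with $|\mu_0|=r(T)$ and set $\theta=-\arg(\mu_0)$, so that $\exp(i\theta)\mu_0=r(T)\geq 0$ (if $r(T)=0$ take $\theta=0$). The key is that this single $\theta$ serves simultaneously for every invertible $X$ and every $n$: by $(1.1)$ we have $\sigma(\Delta_{\lambda}^n(XTX^{-1}))=\sigma(XTX^{-1})=\sigma(T)$, so $\mu_0$ lies in the spectrum, hence in the closed numerical range $\overline{W}(\Delta_{\lambda}^n(XTX^{-1}))$, of every operator occurring in the infimum.

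For the lower bound I would use that, since $\mathcal{R}e(\exp(i\theta)S)$ is self-adjoint, its numerical radius is read off from the numerical range via $\langle\mathcal{R}e(\exp(i\theta)S)x,x\rangle=\mathrm{Re}(\exp(i\theta)\langle Sx,x\rangle)$, giving $w(\mathcal{R}e(\exp(i\theta)S))=\max\{\,|\mathrm{Re}(\exp(i\theta)\nu)|:\nu\in\overline{W}(S)\,\}$. Taking $S=\Delta_{\lambda}^n(XTX^{-1})$ and the admissible point $\nu=\mu_0$, and using $\mathrm{Re}(\exp(i\theta)\mu_0)=r(T)\geq 0$, yields $w(\mathcal{R}e(\cdots))\geq r(T)$ for all $X$ and $n$, so the infimum is at least $r(T)$. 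For the upper bound I would use the contraction property $\|\mathcal{R}e(\exp(i\theta)S)\|\leq\|\exp(i\theta)S\|=\|S\|$, whence $\|\mathcal{R}e(\Delta_{\lambda}^n(\exp(i\theta)XTX^{-1}))\|\leq\|\Delta_{\lambda}^n(XTX^{-1})\|$, and then invoke Corollary~2.1, whose infimum over invertible $X$ is precisely $r(T)$. Chaining $r(T)\le\inf w(\mathcal{R}e(\cdots))=\inf\|\mathcal{R}e(\cdots)\|\le\inf\|\Delta_{\lambda}^n(XTX^{-1})\|=r(T)$ closes the argument.

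The main obstacle here is conceptual rather than computational: one must recognise that a \emph{single} angle $\theta$, depending on $T$ alone, works uniformly over all $X$ and all $n$. This is exactly what the spectral invariance $\sigma(\Delta_{\lambda}^n(XTX^{-1}))=\sigma(T)$ provides, since the extremal spectral point $\mu_0$, and hence the rotation aligning it with the positive real axis, never moves. Once this is understood, everything else reduces to the standard estimates $\|\mathcal{R}e(\cdot)\|\leq\|\cdot\|$ and $\sigma(\cdot)\subseteq\overline{W}(\cdot)$ together with Corollary~2.1.
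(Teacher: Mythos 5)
Your proposal is correct and is essentially the paper's own argument: the same choice $\theta=-\arg(\mu_0)$ for an extremal spectral point $\mu_0$, the same lower bound via $\sigma(\Delta_{\lambda}^n(XTX^{-1}))=\sigma(T)$ together with $\sigma(\cdot)\subseteq\overline{W}(\cdot)$, and the same upper bound via $\Vert\mathcal{R}e(S)\Vert\le\Vert S\Vert$ and Corollary~2.1. The one step you use silently is the homogeneity $\Delta_{\lambda}^n(\exp(i\theta)S)=\exp(i\theta)\Delta_{\lambda}^n(S)$, needed to identify $\mathcal{R}e(\Delta_{\lambda}^n(\exp(i\theta)XTX^{-1}))$ with $\mathcal{R}e(\exp(i\theta)\Delta_{\lambda}^n(XTX^{-1}))$ in both of your bounds; it is true and elementary (since $\vert\exp(i\theta)S\vert=\vert S\vert$ and the partial isometry of $\exp(i\theta)S$ is $\exp(i\theta)$ times that of $S$), but it should be stated, whereas the paper avoids it altogether by first proving the case $r(T)\in\sigma(T)$ and then applying that case to the rotated operator $\exp(i\theta)T$, for which $X(\exp(i\theta)T)X^{-1}=\exp(i\theta)XTX^{-1}$ is an identity requiring no property of $\Delta_{\lambda}$. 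Your opening observation that $w(\mathcal{R}e(S))=\Vert\mathcal{R}e(S)\Vert$ by self-adjointness, so that the two infima coincide for every $\theta$, is a small genuine simplification; the paper instead extracts both equalities from a single sandwich chain ending at Corollary~2.1.
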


\begin{proof} First assume that $r(T) \in \sigma(T)$. Then   for all invertible $X\in \B(H)$, we have
\begin{eqnarray*}
 r(T) \in  \mathcal{R}e(\sigma(T)) = \mathcal{R}e(\sigma(\Delta_{\lambda}^n( XTX^{-1}))).
\end{eqnarray*}
Thus
\begin{eqnarray*}
 r(T) \in  \mathcal{R}e(\sigma(\Delta_{\lambda}^n( XTX^{-1})))
 \subseteq  \mathcal{R}e(\overline{W}(\Delta_{\lambda}^n( XTX^{-1})))
  =  \overline{W}(\mathcal{R}e(\Delta_{\lambda}^n( XTX^{-1})))
\end{eqnarray*}
which implies
\begin{eqnarray*}
 r(T) &\leq&   w( \mathcal{R}e(\Delta_{\lambda}^n(XTX^{-1})))\\
   &\leq& \Vert \mathcal{R}e(\Delta_{\lambda}^n(XTX^{-1}))\Vert\\
    &\leq& \Vert \Delta_{\lambda}^n(XTX^{-1})\Vert.
 \end{eqnarray*}
Since the last inequalities are satisfied for all $X\in \B(H)$ invertible, we obtain
\begin{eqnarray*}
 r(T) &\leq& \inf\{  w( \mathcal{R}e(\Delta_{\lambda}^n(XTX^{-1}))) \: \;X\in\B(H) \;\;\text{invertible}\;\} \\
   &\leq&\inf \{ \Vert \mathcal{R}e(\Delta_{\lambda}^n(XTX^{-1}))\Vert \: \;X\in\B(H) \;\;\text{invertible}\;\}\\
    &\leq& \inf \{ \Vert \Delta_{\lambda}^n(XTX^{-1})\Vert \: \; X\in\B(H) \;\;\text{invertible}\;\}\\
    & = & r(T) \quad \quad \text{(by Corollary 2.1)}.
 \end{eqnarray*}
We have shown that if  $r(T) \in \sigma(T)$ then 
\begin{eqnarray*}
 r(T) &=& \inf\{  w( \mathcal{R}e(\Delta_{\lambda}^n(XTX^{-1}))) \: \;X\in\B(H) \;\;\text{invertible}\;\} \\
   &=&\inf \{ \Vert \mathcal{R}e(\Delta_{\lambda}^n(XTX^{-1}))\Vert \: \;X\in\B(H) \;\;\text{invertible}\;\}\\
    \end{eqnarray*}
Now, if $T$ is an arbitrary operator, then  there exists $z\in \sigma(T)$ such that, $\vert z\vert = r(T)$. Put $\theta = -\arg(z)$.  Then $r(T) = z \exp(i \theta) \in \sigma(\exp(i \theta)T)$.  Hence by the first part of de proof, we conclude that
\begin{eqnarray*}
r(T) = r(  \exp(i \theta)T) &=& \inf\{w(\mathcal{R}e(\Delta_{\lambda}^n(\exp(i\theta) XTX^{-1}))), \: X\in\B(H) \;\;\text{invertible}\;\}
\\&=&\inf\{\| \mathcal{R}e(\Delta_{\lambda}^n(\exp(i\theta)XTX^{-1}))\|, \: X\in\B(H) \;\;\text{invertible}\;\}.
\end{eqnarray*}
This completes the proof of the theorem.
\end{proof}

As immediate consequence of Theorem 3.2, we obtain the following corollary
  which is a characterization of normaloid operators
  \begin{cor} If $T\in\B(H)$, and a natural number $n$, then the following assertions are equivalent

(i) $T$ is normaloid;

(ii) there exists $\theta \in \mathbb{R}$ such that, $\: \text{ for all }\: X\in\B(H) \;\;\text{invertible}$
 $$\Vert T\Vert \leq w(\mathcal{R}e(\Delta_{\lambda}^n(\exp(i\theta) XTX^{-1})));$$

(iii) there exists $\theta \in \mathbb{R}$ such that, $\: \text{ for all }\: X\in\B(H) \;\;\text{invertible}$
 $$\Vert T\Vert \leq \Vert \mathcal{R}e(\Delta_{\lambda}^n(\exp(i\theta) XTX^{-1}))\Vert.$$
\end{cor}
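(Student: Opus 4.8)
The plan is to prove the three conditions equivalent by closing the cycle (i) $\Rightarrow$ (ii) $\Rightarrow$ (iii) $\Rightarrow$ (i), using throughout the identity of Theorem 3.2, the formula of Corollary 2.1, and the two universal inequalities $r(S)\le\|S\|$ and $w(S)\le\|S\|$. I would also record at the outset that $\mathcal{R}e(S)$ is self-adjoint, so that $w(\mathcal{R}e(S))=\|\mathcal{R}e(S)\|$; this makes the passage from (ii) to (iii) transparent, although strictly only the inequality $w(\mathcal{R}e(S))\le\|\mathcal{R}e(S)\|$ is needed. Since all three statements are genuinely about $T$ being normaloid, i.e.\ $r(T)=\|T\|$, the engine of the argument is that the two infima in Theorem 3.2 both equal $r(T)$.

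For the forward directions, assume first that $T$ is normaloid, so $r(T)=\|T\|$. By Theorem 3.2 there is a $\theta\in\mathbb{R}$ with $r(T)=\inf_X w(\mathcal{R}e(\Delta_\lambda^n(\exp(i\theta)XTX^{-1})))$, the infimum over invertible $X$. As every term of an infimum dominates the infimum and the latter equals $r(T)=\|T\|$, each invertible $X$ satisfies $\|T\|\le w(\mathcal{R}e(\Delta_\lambda^n(\exp(i\theta)XTX^{-1})))$, which is precisely (ii) for this $\theta$. Keeping the same $\theta$ and applying $w(A)\le\|A\|$ to $A=\mathcal{R}e(\Delta_\lambda^n(\exp(i\theta)XTX^{-1}))$ yields (iii) term by term; this settles (i) $\Rightarrow$ (ii) $\Rightarrow$ (iii).

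The direction (iii) $\Rightarrow$ (i) is where I expect the only real subtlety, namely the existential quantifier on $\theta$: the angle supplied by hypothesis (iii) need not coincide with the one produced by Theorem 3.2, so I would deliberately avoid invoking the $w$-or-$\|\cdot\|$ identity of Theorem 3.2 for this particular $\theta$. Instead I would bound $\|\mathcal{R}e(S)\|\le\|S\|$ to obtain, for every invertible $X$, the inequality $\|T\|\le\|\Delta_\lambda^n(\exp(i\theta)XTX^{-1})\|$, then take the infimum over $X$. Observing that $X(\exp(i\theta)T)X^{-1}=\exp(i\theta)XTX^{-1}$, Corollary 2.1 applied to the operator $\exp(i\theta)T$ identifies this infimum as $r(\exp(i\theta)T)$, and $|\exp(i\theta)|=1$ gives $r(\exp(i\theta)T)=r(T)$. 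Hence $\|T\|\le r(T)$, which together with the universal bound $r(T)\le\|T\|$ forces $r(T)=\|T\|$, so $T$ is normaloid. Because this route is valid for an arbitrary $\theta$, the mismatch between the $\theta$ of (iii) and that of Theorem 3.2 is harmless, and the cycle closes.
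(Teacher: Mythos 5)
Your proof is correct and follows essentially the route the paper intends: the corollary is stated there as an immediate consequence of Theorem 3.2 (with Corollary 2.1 in the background), and your cycle (i)$\Rightarrow$(ii)$\Rightarrow$(iii)$\Rightarrow$(i) is exactly that derivation made explicit. Your extra care in (iii)$\Rightarrow$(i) --- avoiding the possibly different $\theta$ supplied by Theorem 3.2 by bounding $\Vert\mathcal{R}e(S)\Vert\le\Vert S\Vert$ and applying Corollary 2.1 to the operator $\exp(i\theta)T$ --- is precisely the estimate used inside the paper's own proof of Theorem 3.2, so it is the same machinery, correctly deployed to close the quantifier gap the paper glosses over.
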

  
 \bigskip

  We end this paper by the following theorem which gives a new formula of the spectral radius of $T$,  in terms of the asymptotic behavior of  powers  and the numerical radius of $T$
\begin{thm} For every operator  $T\in\B(H)$ and for each natural number n,  we have 
\begin{eqnarray*}
r(T) = \lim_k w(\Delta_\lambda^n(T^k))^{1/k}.
\end{eqnarray*}
\end{thm}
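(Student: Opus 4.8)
The plan is to squeeze the quantity $w(\Delta_\lambda^n(T^k))^{1/k}$ between $r(T)$ and a sequence already known to converge to $r(T)$, in exactly the same spirit as the proof of Theorem 2.2, but now inserting the numerical radius between the spectral radius and the operator norm by means of the classical chain $r(S)\le w(S)\le\|S\|$.

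First I would fix a natural number $k$ and apply this chain to the operator $S=\Delta_\lambda^n(T^k)$. Using the spectral invariance $\sigma(\Delta_\lambda^n(T^k))=\sigma(T^k)$ together with the behavior of the spectral radius under powers, $r(T^k)=r(T)^k$, I obtain
$$r(T)^k=r(T^k)=r(\Delta_\lambda^n(T^k))\le w(\Delta_\lambda^n(T^k))\le\|\Delta_\lambda^n(T^k)\|.$$
Taking $k$-th roots then gives, for every $k\in\N$,
$$r(T)\le w(\Delta_\lambda^n(T^k))^{1/k}\le\|\Delta_\lambda^n(T^k)\|^{1/k}.$$

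Next I would invoke Theorem 2.2, which guarantees that $\lim_k\|\Delta_\lambda^n(T^k)\|^{1/k}=r(T)$. Since the left-hand side of the last display is the constant $r(T)$ and the right-hand side converges to $r(T)$, the squeeze theorem yields at once both the existence of $\lim_k w(\Delta_\lambda^n(T^k))^{1/k}$ and the value $r(T)$, which is precisely the asserted formula.

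I do not expect a genuine obstacle here: the argument is a direct sandwich, and every ingredient ($r\le w\le\|\cdot\|$, the spectral invariance of $\Delta_\lambda^n$, and the norm-version limit from Theorem 2.2) is already available. The only point worth a word of care is that the existence of the limit is not assumed in advance but is a consequence of the squeeze; for this reason I would establish the two-sided bound for every fixed $k$ \emph{before} passing to the limit, rather than writing $\lim_k$ of a sequence whose convergence has not yet been justified.
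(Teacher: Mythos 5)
Your proposal is correct and coincides with the paper's own argument: both squeeze $w(\Delta_\lambda^n(T^k))^{1/k}$ between $r(T)$ and $\|\Delta_\lambda^n(T^k)\|^{1/k}$ via the chain $r\le w\le\|\cdot\|$ applied to $\Delta_\lambda^n(T^k)$, then invoke Theorem 2.2. Your explicit remark that the existence of the limit follows from the squeeze (rather than being assumed) is a minor point of care the paper glosses over, but the substance is identical.
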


\begin{proof}  For each natural number n and  k,  we have 
\begin{eqnarray*}
r(T)^k=r(T^k)  = r(\Delta_\lambda^n(T^k)) \leq  w(\Delta_\lambda^n(T^k))\leq \|\Delta_\lambda^n(T^k)\|
\end{eqnarray*}
Hence 
\begin{eqnarray*}
r(T)  \leq  w(\Delta_\lambda^n(T^k))^{1/k}\leq \|\Delta_\lambda^n(T^k)\|^{1/k}.
\end{eqnarray*}
By Theorem 2.2, we deduce that
\begin{eqnarray*}
r(T) = \lim_k w(\Delta_\lambda^n(T^k))^{1/k}.
\end{eqnarray*}
Which completes the proof.
\end{proof}

\begin {thebibliography}{99}

\bibitem {alu}    {\sc A. Aluthge},
\emph{} { \textit {On p-hyponormal operators for $0 < p <1$}}, Integral Equations
Operator Theory 13 (1990), 307-315. 

\bibitem{ay} {T. Ando and T. Yamazaki},
\emph{}{\textit {The iterated Aluthge transforms of a 2-by-2 matrix converge}}, Linear Algebra Appl. 375 (2003), 299-309

\bibitem{ams} {\sc AJ. Antezana, P. Massey and D. Stojanoff},
\emph{}{ \textit {$\lambda$-Aluthge transforms and Schatten ideals}}, Linear Algebra Appl, 405 (2005), 177-199. 

\bibitem{fur} {T. Furuta},
\emph{}{\textit {Invitation to linear operators}}, Taylor  Francis,  London 2001.

\bibitem{kp3} {\sc  I. Jung, E. Ko, and C. Pearcy },
\emph{}{ \textit { Aluthge transform of operators}}, Integral Equations Operator Theory 37 (2000), 437-448.

\bibitem{kp2} {\sc  I. Jung, E. Ko, C. Pearcy },
\emph{}{ \textit {Spectral pictures of Aluthge transforms of operators}}, Integral Equations Operator Theory 40 (2001), 52-60.

\bibitem{kp1}  {\sc  I. Jung, E. Ko, C. Pearcy },
\emph{}{\textit {The iterated Aluthge transform of an operator}},  
Integral Equations Operator Theory 45 (2003), 375-387.

\bibitem{oku} {\sc K. Okubo}, 
\emph{}{ \textit {On weakly unitarily invariant norm and the  Aluthge  transformation}}, Linear Algebra Appl. 371 (2003),  369--375.

\bibitem{rota} {\sc G. Rota}, 
\emph{}{ \textit {On models for linear operators}},  comm. Pure Appl. Math.  13 (1960),  496--472.

\bibitem{tam} {\sc  T. Tam},
\emph{}{ \textit { $\lambda$- Aluthge iteration and spectral radius}}, Integral Equations Operator Theory 60 (2008), 591-596.

\bibitem{wan} {\sc D.Wang},
\emph{}{\textit {Heinz and McIntosh inequalities,  Aluthge tranformation and the spectral radius}},
Math. Inequal. Appl. 6 (2003), 121-124.

\bibitem{yam} {\sc T. Yamazaki},
\emph{}{\textit {An expression of the spectral radius via Aluthge tranformation}},
Proc. Amer. Math. Soc. 130 (2002), 1131-1137.

\end{thebibliography}

\end{document}